 \newtheorem{thm}{Theorem}[section]
 \newtheorem{cor}[thm]{Corollary}
 \theoremstyle{definition}
 \newtheorem{defn}[thm]{Definition}
 \theoremstyle{remark}
 \numberwithin{equation}{section}
\newcommand{\bC}{{\mathbb{C}}}
\newcommand{\bR}{{\mathbb{R}}}
\newcommand{\bT}{{\mathbb{T}}}
\newcommand{\bZ}{{\mathbb{Z}}}
\newcommand{\ol}{\overline}
\newcommand{\sevone}{{\mathcal{C}_{\text{sev}_1}}}
\newcommand{\sevtwo}{{\mathcal{C}_{\text{sev}_2}}}
\newcommand{\sevthree}{{\mathcal{C}_{\text{sev}_3}}}
\newcommand{\Cknet}{{\mathcal{C}_{\text{knet}}}}
\newcommand{\Ckag}{{\mathcal{C}_{\text{kag}}}}
\newcommand{\Csod}{\mathcal{C}_{\text{SOD}}}
 \newcommand{\C}{{\mathcal{C}}}
 \newcommand{\D}{{\mathcal{D}}}
\renewcommand{\H}{{\mathcal{H}}}
 \newcommand{\K}{{\mathcal{K}}}
 \newcommand{\M}{{\mathcal{M}}}
 \newcommand{\T}{{\mathcal{T}}}
 \newcommand{\U}{{\mathcal{U}}}
\newcommand{\rank}{\operatorname{rank}}
\begin{document}
%
%
%
%
%
%
%
%
%
\title[Crystal frameworks and rigidity operators]
 {Crystal frameworks, matrix-valued functions and rigidity operators}\footnote{International Workshop on Operator Theory and Applications, Seville, July 2011}
\author[S.C. Power]{S.C. Power}

\address{Department of Mathematics and Statistics\\ Lancaster University\\
Lancaster LA1 4YF \\U.K. }

\email{s.power@lancaster.ac.uk}

\subjclass{Primary 52C75; Secondary 46T20}

\keywords{Crystal framework, rigidity operator, matrix function, rigid unit mode}

\date{July 2011}

\begin{abstract}
An introduction and survey is given of some recent work on the infinitesimal dynamics
of \textit{crystal frameworks}, that is, of  translationally periodic discrete bond-node structures in $\mathbb{R}^d$, for $ d=2,3,\dots $.
We discuss the rigidity matrix, a fundamental object from finite bar-joint framework theory,
rigidity operators, matrix-function representations and low energy phonons.
These phonons in material crystals, such as quartz and zeolites, are known as rigid unit modes, or RUMs,
and are associated with the relative motions of rigid units, such as ~SiO$_4$ tetrahedra in the tetrahedral polyhedral bond-node model for quartz.
We also introduce
semi-infinite crystal frameworks, bi-crystal frameworks and  associated multi-variable Toeplitz operators.
\end{abstract}

\maketitle

\section{Introduction}
A survey is given of some recent work on the infinitesimal dynamics of \textit{crystal frameworks}, by which we mean translationally periodic discrete bar-joint frameworks in $\bR^d$.
This includes a discussion of
rigidity operators,  matrix symbol function representations and the connections with models for low energy phonon modes in various material crystals. These modes are also known as rigid unit modes, or RUMs, reflecting their origin in the relative motion of
rigid units in the crystalline structure.
I also introduce briefly the contexts of semi-infinite {crystal frameworks} and bicrystal frameworks
and indicate how their rigidity operators involve multivariable Toeplitz operators whose symbol functions are matrices over multi-variable trigonometric polynomials on the $d$-torus.

The topic of infinite bar-joint frameworks, whether periodic or not, can be pursued as a purely mathematical endeavour and many aspects of deformability and rigidity remain to be understood.
The main perspectives below and related issues are  developed in Owen and Power \cite{owe-pow-hon}, \cite{owe-pow-fsr}, \cite{owe-pow-crystal} and Power \cite{pow-matrix}, \cite{pow-aff}.

Translationally periodic bond/node bar-joint frameworks or networks are ubiquitous
in mathematics (periodic tilings for example), solid state physics (crystal lattices, graphene), solid state chemistry (zeolites) and material science (microporous metal organic frameworks). So there is no lack of interesting examples.
I shall illustrate a number of concepts with three examples derived from
tilings seen in Seville at the Alcazaar and the Cathedral.

\section{Models for material crystals and low energy phonons.}
We begin by outlining one particualr motivation from material science.
A crystal framework $\C$ in $\bR^3$ can serve as a mathematical model for the essential geometry of the disposition of atoms and bonds in a material crystal $\M$.
In the model of interest to us
the vertices correspond to certain atoms while the edges  correspond
in some way to strong bonds. Also the identification of strongly bonded  "units" in $\M$ imply a polyhedral net structure and it is this that gives the relevant abstract framework $\C$. A fundamental  example of this kind is quartz, SiO$_2$,
in which each silicon atom lies at the centre of a strongly bonded  SiO$_4$ unit, which in turn may be modeled as a tetrahedron with
an  oxygen atom at each vertex.
In this way the material crystal quartz provides a mathematical crystal framework
of pairwise connected tetrahedra with a particular connectedness
and geometry.

Material scientists are interested in the manifestation and explanation of various forms of low energy motion and oscillation in materials.
Of particular interest are the
rigid unit modes  in aluminosilicate crystals and zeolites,
where quite complicated tetrahedral net models are relevant.
These  low energy (long wavelength) phonon modes
are observed in neutron
scattering experiments and have been shown to correlate closely with the modes observed in computational simulations.
There is now a considerable body of literature tabulating the (reduced) wave vectors of RUMs  of various crystals as subsets of the unit cube (Brillouin zone)
and it has become evident that the primary determinant  is the geometric structure of the abstract frameworks $\C$. See, for example, Dove et al \cite{dov-exotic}, Hammond et al \cite{ham-dov-zeo1997}, \cite{hamdov-zeo1998},
Giddy et al \cite{gid-et-al} and Swainson and Dove \cite{swa-dov}.
Particularly intriguing is the simulation study in Dove et al \cite{dov-exotic} which gives
a range of pictures of the RUM spectrum and multiplicites for various idealized crystal types.

In the experiments and in the simulations the background mathematical model is classical lattice
dynamics and rigid unit modes are observed where the phonon dispersion curves indicate vanishing energy.
However, one can also identify such limiting cases through a direct linear approach as we outline below and from this it follows that these sets (at least in simulations) may be viewed as real algebraic varieties.
See Theorem \ref{t:phaserigiditymatrix} below, \cite{owe-pow-crystal}, \cite{pow-matrix} and Wegner \cite{weg}.
It is convenient to define the RUM dimension to be the dimension of this algebraic variety.
(See Section 5.)
In 3D it takes the values $0,1,2,3$.

\section{An illustrative example}
The following simple example will serve well to illustrate
the notation scheme for general crystal frameworks in $d$ dimensions
that we adopt. The example is also of
interest in its own right, as we see later.

Figure 1 indicates a translationally periodic  bar-joint framework $\C=(G,p)$
determined by a sequence
$p=(p_k)$ in $\bR^2$. The framework edges $[p_i,p_j]$, associated with the edges
of the underlying graph $G$, are viewed as inextensible bars connected at the framework vertices
$p_k$ but otherwise unconstrained.

\begin{center}
\begin{figure}[h]
\centering
\includegraphics[width=7cm]{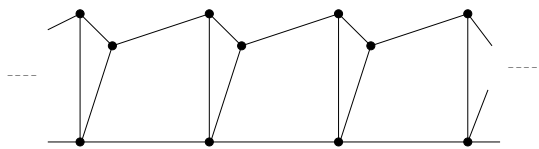}
\caption{An infinite bar-joint framework.}
\end{figure}
\end{center}

Let the scaling be such that
$$
p_1=p_{1,0}=(0,0), ~~ p_2=p_{2,0}= (0,4),~~ p_3=p_{3,0}= (1,3)
$$
are three framework vertices of a triangular subframework. Write
their translates as
\[
p_{\kappa, k}=T_kp_{\kappa,0}, \quad \mbox{ for } \kappa \in \{1,2,3\}, k\in \bZ,
\]
where $T_k$ is the isometry $T_k:(x,y)\to (x+4k,y)$ od $\bR^2$. The translation group $\T=\{T_k:k\in \bZ\}$
is also used to define a natural  periodic labelling of the framework edges:
\[
e_1=e_{1,0}=[p_1,p_2],~~ e_2=e_{2,0}=[p_2,p_3], ~~e_3=e_{3,0}=[p_1,p_3,],
\]
\[
 e_4=e_{4,0}
=[p_{1,0}, p_{1,1}], ~~e_5= e_{5,0}=[p_{3,0}, p_{2,1}]
\]
and
\[
e_{j,k}=T_ke_j,, \quad \mbox{ for } j \in \{1,2,3,4,5\}, k\in \bZ.
\]
Thus, the pair of finite sets
\[
F_v=\{p_1, p_2, p_3\},\quad F_e=\{e_1,\dots ,e_5\}
\]
have disjoint translates under $\T$ and the set of all such translates determines $\C$.

In an exactly similar way a translationally periodic bar-joint framework $\C$ in $\bR^d$ is determined by a triple $(F_v, F_e, \T)$ where we refer to the finite set pair
$\M=(F_v, F_e)$ as a \textit{motif}
for $\C$. Of particular interest for applications are the cases $d=2, 3$ in which $\T =\{T_k:k\in\bZ^d\}$ and
$\T$ has full rank. For $d=3$ "full rank" means that the so-called \textit{period vectors}
\[
a_1 =T_{(1,0,0)}(0),\quad a_2=T_{(0,1,0)}(0),\quad a_3= T_{(0,0,1)}(0)
\]
are linearly independent, in which case  the framework vertices,  if they are distinct, form a discrete set in $\bR^3$. We call such discrete bar-joint frameworks \textit{crystal frameworks}.

We now introduce a key dynamical ingredient, namely the notion of an \textit{infinitesimal flex}. This definition is the same as that for finite bar-joint frameworks being a specification of velocity vectors at the nodes which, to first order, do not change edge lengths.

\begin{defn}
Let $\C$ be crystal framework, with framework vertices $p_{\kappa,k}$ as above.
An infinitesimal flex of $\C$ is a set of vectors $u_{\kappa,k}$ (velocity vectors)
such that for each edge $e=[p_{\kappa,k},p_{\tau,l}]$
\[
\langle p_{\kappa,k}-p_{\tau,l},u_{\kappa,k}-u_{\tau,l}\rangle = 0.
\]
\end{defn}

The linear equations required for an infinitesimal flex $u=(u_{\kappa,k})$
translate to a single equation $R(\C)u=0$ where $R(\C)$ is the so called \textit{rigidity matrix} for the framework and where $u$ is a vector in the direct product vector space
$\H_v = \prod_{\kappa,k} \bR^d$, regarded as  a composite vector of  instantaneous velocities.
The rigidity matrix is sparse with rows labelled by edges and columns labelled by the
Euclidean coordinate labels
$(\kappa, k, \sigma)$ of the framework vertices, with $\sigma \in \{1, \dots ,d\}$;
the row  for framework edge
$e=[p_i,p_j]$ has the entry $(p_{\kappa,k}-p_{\tau,l})_\sigma$ for column $(\kappa,k,\sigma)$,
and has the negative of this entry for column $(\tau,l,\sigma)$.
Thus for $d=3$ row $e$  appears as
\[
[0\dots 0 ~~v_e~~ 0\dots 0 ~~-v_e~~ 0\dots 0]
\]
where the vector $v_e= p_{\kappa,k}-p_{\tau,l}$ (resp $-v_e$) is distributed in the columns for $(\kappa, k, \sigma)$
(resp. $(\tau, l, \sigma)$) with $\sigma \in \{x, y, z\}$.

From various viewpoints, such as phase-periodic velocity vectors on the one hand, or square-summable  velocity vectors on the other hand,  with the introduction
of complex scalars and functional representations of vector spaces, the rigidity matrix
$R(\C)$ leads to  a matrix-valued function $\Phi(z)$ with  $|F_e|$ rows and $d|F_e| $ columns.
The entries are scalar-valued functions on the $d$-torus of points $z= (z_1, \dots z_d)$
in $\bC^d$ with $|z_i|=1$.

We define this matrix function below in Definition \ref{d:Phi2} and one can check that  the strip framework of Figure 1 has associated matrix function
$$ \Phi(z)= \left[ \begin {array}{cccccc}
0 &-4&0&4&0&0\\
0 &0&-1&1&1&-1\\
-1 &-3&0&0&1&3\\
-4(1-\ol{z})&0&0&0&0&0\\
0&0&3\ol{z}&\ol{z}&-3&-1
\end {array} \right].
$$

\subsection{Examples from Seville}
The next three frameworks are based on some simple
two-dimensional tessellations that are suggested by tilings found in Seville cathedral
(Figures 2 and 3) and in the Alcazaar in Seville (Figure 4).
All three are in Maxwell counting equilibrium in the sense that the average
coordination number (average vertex degree) is $2$, matching the degree of freedom
of each vertex, while each subframework is not "overconstrained", in the sense that the number of edges
does not exceed twice the number of vertices.

A motif for the framework $\sevone$ is shown in Figure 3 together with the period vectors (dotted).
The motif edges consist of a square of edges together with two
vertical edges whose (equal) lengths fix the geometry up to a global scaling. (The "rigid units" of this framework are the single vertex subframeworks.)

Note that there is an evident (edge-length preserving) continuous flex, or deformation, $p(t)=(p_{\kappa,k}(t))$ of $\sevone$ which is associated with an expansion in the $x$ direction and a matching contraction in the $y$ direction. We remark that in the case of the geometry with all edge lengths equal this deformation passes through the framework composed of congruent rhombs which is reciprocal
(in the lattice sense \cite{cox}) to the well-known kagome framework, indicated in Section 5.1.
From the first instant of the deformation, so to speak, one obtains an infinitesimal flex
$u=p'(0)$ which (unlike infinitesimal translation flexes) is unbounded. Less evident are various
nontrivial bounded infinitesimal flexes, but we see below that there are plenty of these and that the RUM dimension is $1$.

\begin{center}
\begin{figure}[h]
\centering
\includegraphics[width=7cm]{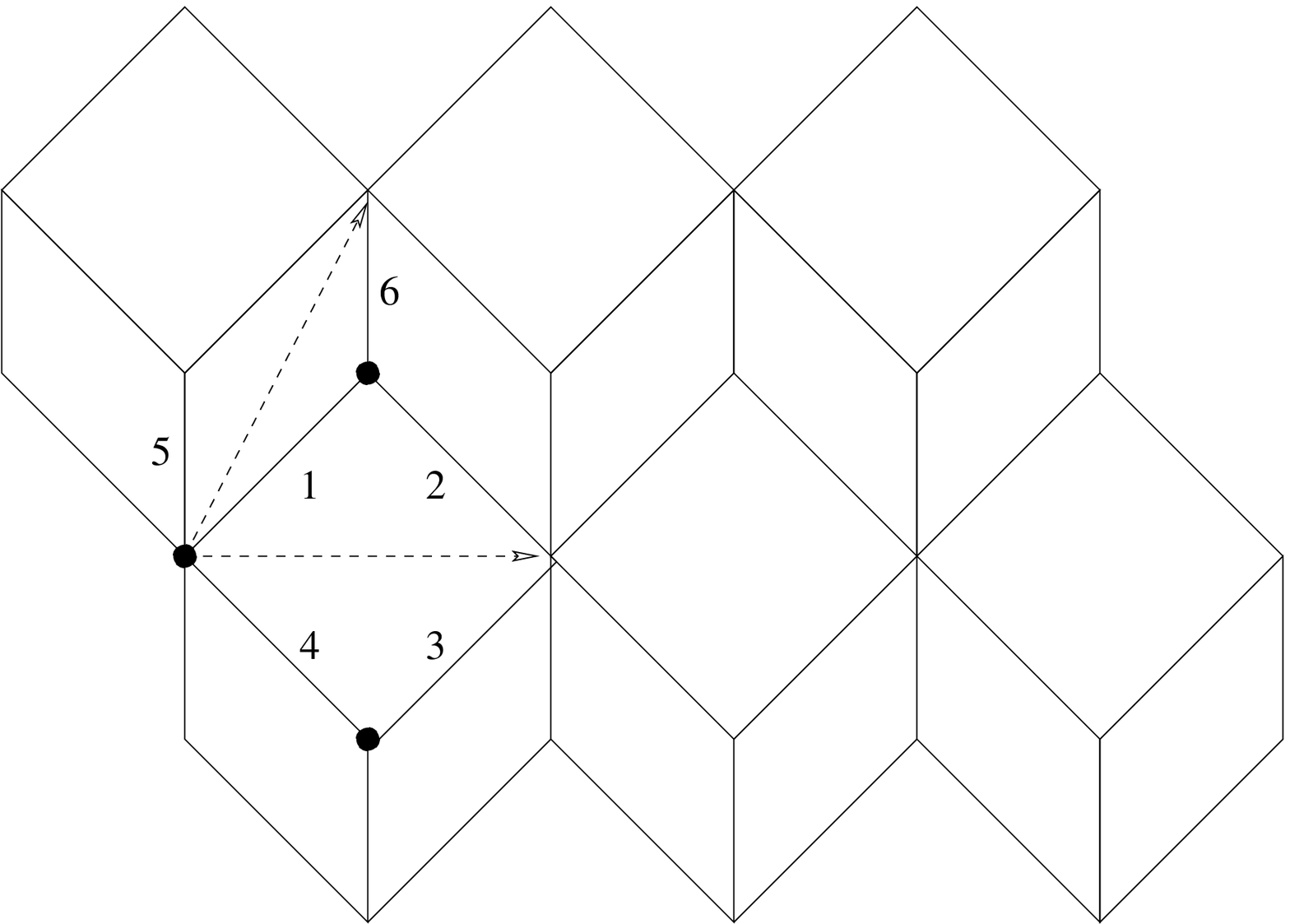}
\caption{The crystal framework $\sevone$.}
\end{figure}
\end{center}

The framework $\sevtwo$ in Figure 3 has triangular rigid units and a local
(finitely supported) infinitesimal flex is indicated, with four nonzero velocity vector components.
It is a general principle, as we note further below, that such a local phenomenon makes the framework
maximally flexible from a RUM point of view.

\begin{center}
\begin{figure}[h]
\centering
\includegraphics[width=7cm]{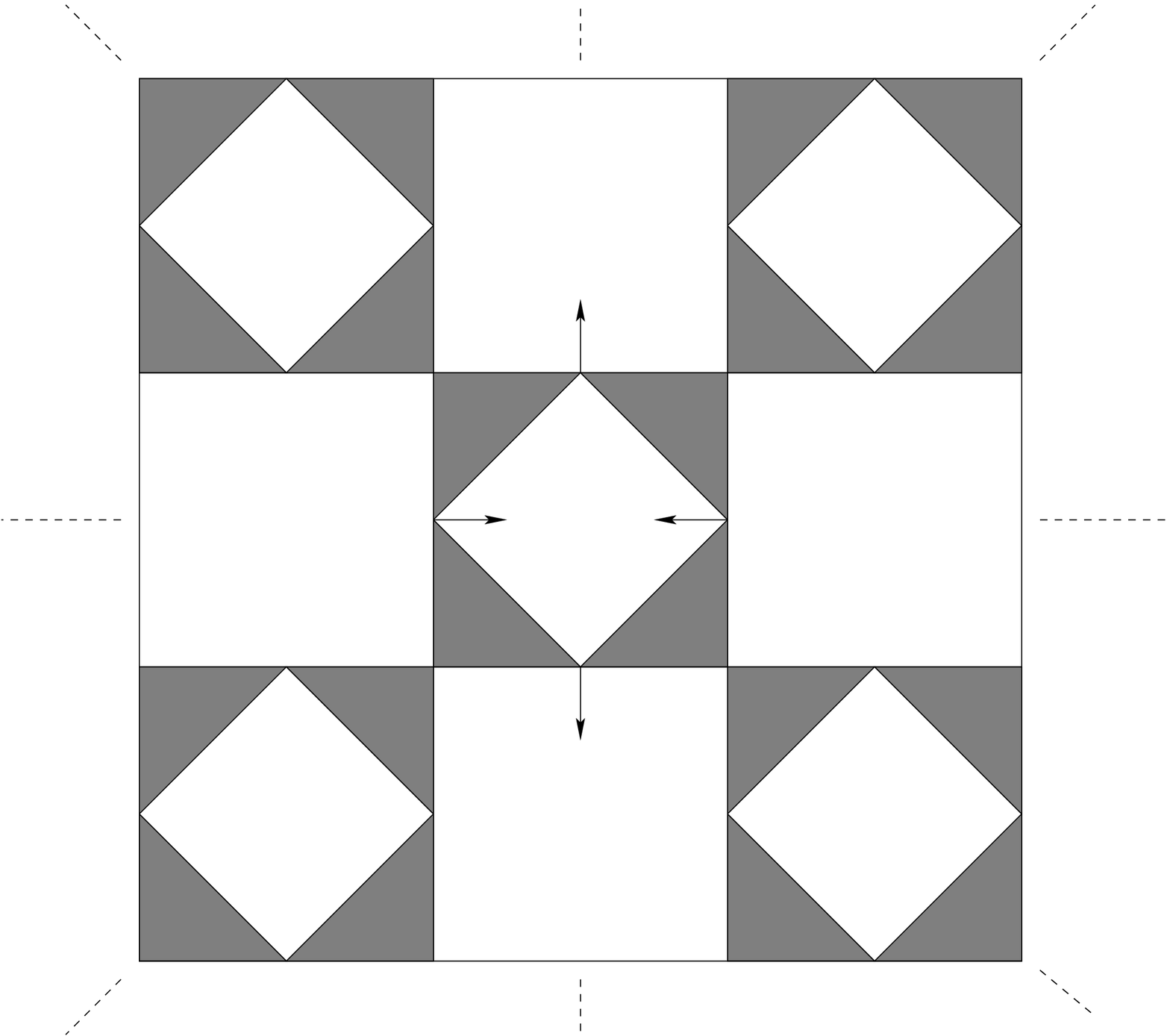}
\caption{$\sevtwo$, with a local infinitesimal flex.}
\end{figure}
\end{center}

The planar graph, or "topology" of this framework
is rather interesting, being a network of $4$-rings of triangles connected
"square-wise", so that the "holes" are $8$-cycles and $4$-cycles. One can make  similar constructions with equilateral triangles with this topology although now the local infinitesimal flex is lost.
We remark that periodic networks of pairwise corner-joined congruent equilateral triangles provide
the 2D variants of the tetrahedral nets associated with zeolites.
(That the hole cycles of
2D zeolites can be arbitrarily large
 follows from the substitution move which replaces each rigid unit triangle by a $3$-ring of smaller triangles with edge length halved.)

\begin{center}
\begin{figure}[h]
\centering
\includegraphics[width=7cm]{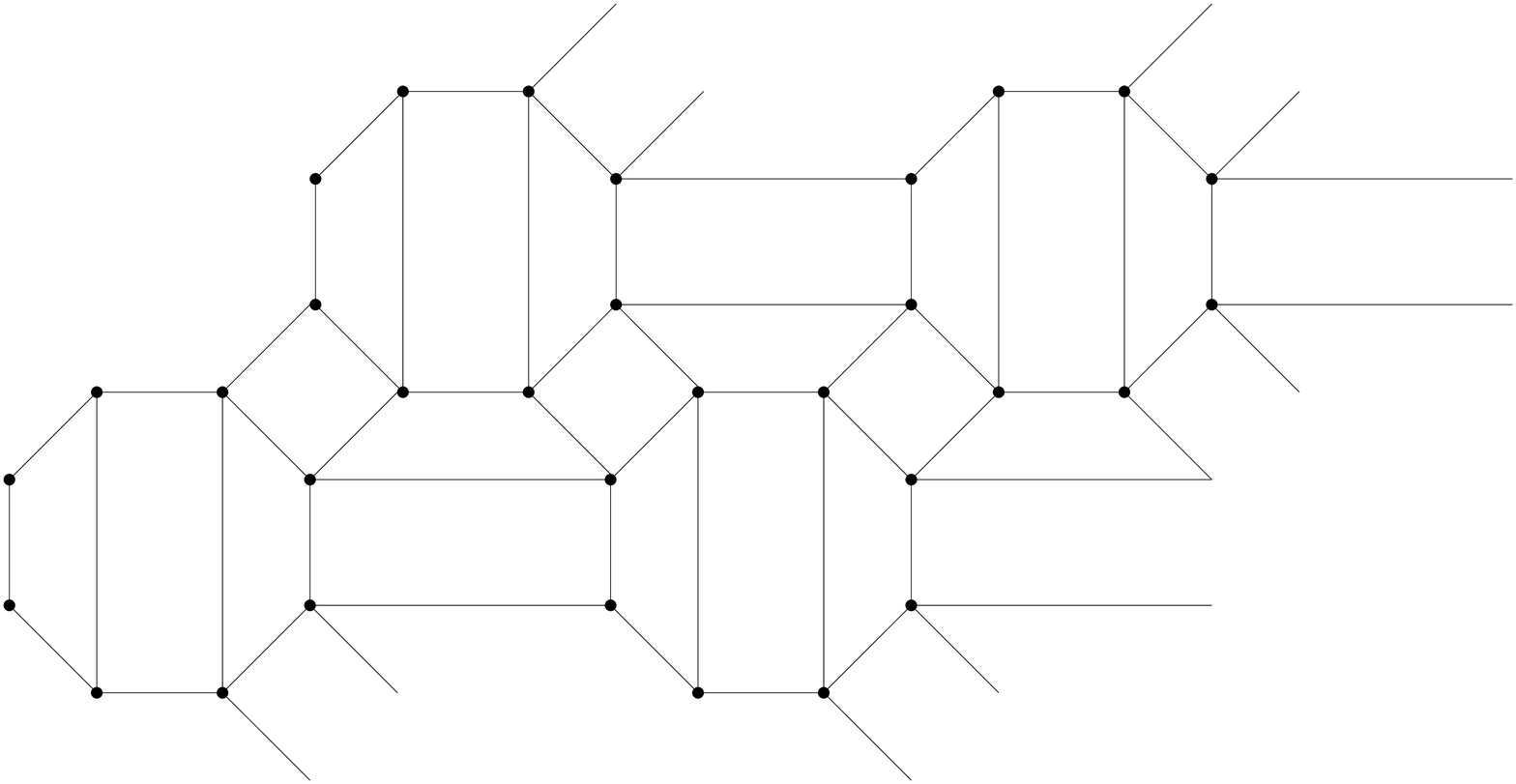}
\caption{Part of $\sevthree$, a homeomorph of $\C_{\bZ^2}$.}
\end{figure}
\end{center}

The framework $\sevthree$ is derived from a tiling in the Alcazaar in Seville.
A moment's thought reveals it to have the same topology (underlying graph) as that for the basic square grid framework $\C_{\bZ^2}$. The infinitesimal flexibility is less evident than is the case for $\C_{\bZ^2}$
but from the RUM viewpoint they turn out to be equally flexible with RUM dimension $1$.

\section{Bar-joint frameworks - a very brief overview.}

\subsection{Watt, Peaucellier, Cauchy, Euler, Kempe, Maxwell, Laman.} Informally,
a "linkage" is a bar-joint framework with one degree of essential flexibility.
In 1784 James Watt designed a bar-joint linkage which transformed circular motion
into approximate linear motion.
This was rather important for steam engine transmission.
The mechanism was approximate
and was superseded by  Peaucellier's exact linear motion linkage eighty years later. In
1876 Kempe \cite{kem} solved the general inverse problem by showing that any finite algebraic curve can be simulated by a linkage.
The rigidity of geometric frameworks was also of interest to Euler and to Cauchy
who in particular were concerned with the rigidity of polyhedra with hinged faces; a beautiful classical result is the infinitesimal rigidity of all convex triangle-faced polyhedra.

James Clerk Maxwell initiated combinatorial aspects with the observation that a graph $G=(V,E)$ with a minimally rigid generic framework realisation in the plane must satisfy the simple counting rule $2|V|-|E| =3$ together with the inequalities  $2|V'|-|E'| \geq 3$ for all subgraphs. The number $3$ represents the number of independent global infinitesimal motions for the plane.
In 1970 Laman \cite{lam} obtained the fundamental result that Maxwell's
conditions are sufficient for generic rigidity and this result also anticipated the advent
of matroid theory in rigidity theory. While corresponding counting rules
are necessary in three dimensions they fail to be sufficient and no necessary and sufficient combinatorial conditions for generic rigidity are known ! For further information see  \cite{asi-rot}, \cite{asi-rot-2}, \cite{gra-ser-ser}.

\subsection{Some Recent work}
Laman's theorem concerns \textit{generic} frameworks with a particular graph.
One can expect that special frameworks with global symmetries may have more flexibility and this is a topic of current interest. See for example, Connelley et al \cite{con-et-al}, Owen and Power \cite{owe-pow-fsr} and Shulze
\cite{sch-1}, \cite{sch-2}.
Understanding constraint systems of geometric objects with symmetry present is also of significance
for algorithms for CAD software \cite{owe-pow-fsr}.

Laman's theorem is also concerned with \textit{finite} frameworks. A natural generalisation,
also of significance for applications, are periodic frameworks in the sense of the
crystal frameworks above. See also Whiteley \cite{whi-union}, Borcea and Streinu \cite{bor-str},
Malestein and Theran \cite{mal-the}, and Ross, Schulze and Whiteley \cite{ros-sch-whi}.

The theory of general infinite bar-joint frameworks, from the point of view of rigidity, is a novel topic
and perhaps a rather curious one. We point out in Owen and Power  \cite{owe-pow-kempe},\cite{owe-pow-hon} that it is possible to generalise Kempe's theorem to the effect that any \textit{continuous} curve
(ie. continuous image of $[0,1]$ in $\bR^2$) may
be simulated by an infinite linkage. (In \cite{owe-pow-kempe} this is achieved with three vertices of infinite degree but in fact infinite degree vertices are not necessary.)

In  material science the microporous flexing materials known as zeolites are on the one hand important for industrial applications, as filters, and on the other hand present diverse tetrahedral rigid unit frameworks. The degree of continuous flexibility of such idealised zeolites is investigated in Kapko et al \cite{kap-daw-tre-tho}.

\section{The RUM spectrum and RUM dimension of $\C$}
Let $\C=(F_v, F_e, \T)$ be a crystal framework in $\bR^d$ and
let $\K_v$ be the vector space  $\prod_{\kappa, k}\bC^{d|F_v|}$ consisting of  infinitesimal velocity vectors. Let $\bT^d$ be the $d$-torus of points $\omega=(\omega_1, \dots ,\omega_d)$ and for $k \in \bZ^d$ write $\omega^k$ for the unimodular complex numbers $\omega_1^{k_1}\dots\omega_d^{k_d}$.

\begin{defn} (\cite{owe-pow-crystal}, \cite{pow-matrix}.)
(a) A velocity vector $\tilde{u}$ in $\K_v$ is  periodic-modulo-phase
for the (multi-)phase factor  $\omega \in \bT^d$  if there exists a vector $u=(u_\kappa)$ in $\bC^{|F_v|}$  such that
\[
\tilde{u}_{\kappa,k} = \omega^ku_\kappa, \quad \kappa\in F_v, k\in \bZ^d.
\]
Also $\K_v^\omega$ denotes the associated vector subspace of such vectors.

(b) A periodic-modulo-phase infinitesimal flex (or wave flex) is a vector $\tilde{u}$ in $\K_v^\omega$ which is an infinitesimal flex for $\C$.

(c) The rigid unit mode spectrum, or  RUM spectrum, of $\C$ (with specified translation group $\T$)  is the set $\Omega(\C)$ of phases for which there exists a nonzero periodic-modulo-phase infinitesimal flex.
\end{defn}

To each multiphase $\omega$ there exists a unique wave vector
${\bf k}=({\bf k}_1,\dots ,{\bf k}_d)$ such that
$\omega_j =
e^{2\pi i~{\bf k_j}}, 1 \leq j \leq d$.
Ignoring bond constraints for the moment, recall that the framework points  of $\C$ undergo harmonic motion with wave vector ${\bf k}$ when the vertex positions at time $t$ satisfy equations of the form
\[
p_{\kappa, k}(t) =  p_{\kappa, k} + \exp(2\pi i~{\bf k}\cdot k)\exp(i\alpha t)v_\kappa,
\]
where $\alpha/2\pi$ is the frequency of the oscillation and where ${\bf k}\cdot k $ is the inner product ${\bf k}_1\cdot k_1+ \dots +{\bf k}_d\cdot k_d$. Such pure motions appear as basic solutions in lattice dynamics, under harmonic approximation, with general solutions obtained by linear superposition.
(See Dove \cite{dov-book} for example.)

The following theorem from \cite{pow-matrix} provides an explanation for the
connection between low energy oscillation modes alluded to in Section 2 and infinitesimal wave flexes.

\begin{thm}\label{p:LongWavengthLimit}
Let $\C$ be a crystal framework, with specified periodicity,  and let ${\bf k}$ be a wave vector with point $\omega \in\bT^d$.
Then the following assertions are equivalent.

(i) $(\omega^ku_\kappa)_{\kappa,k}$ is a nonzero periodic-modulo-phase infinitesimal (complex) flex for $\C$.

(ii)
For the vertex wave motion
\[
p_{\kappa, k}(t) =  p_{\kappa, k} + u_\kappa\exp(2\pi i~{\bf k}\cdot k)\exp(i\alpha t),
\]
and a given time interval, $t\in [0,T]$,
the bond length changes
\[
|p_{\kappa, k}(t)- p_{\tau, l}(t)|-|p_{\kappa, k}(0)- p_{\tau, l}(0)|,
\]
for the edges $e$
tend to zero uniformly, in both $t$ and $e$, as the wavelength $2\pi /\alpha$ tends to infinity.
\end{thm}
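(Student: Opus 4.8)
The plan is to reduce both assertions to a single family of scalar quantities attached to the edges and then to use the crystallographic periodicity to pass between the infinite edge set and the finite motif. For an edge $e=[p_{\kappa,k},p_{\tau,l}]$ I would write the time-$t$ difference of the two moving endpoints as
\[
p_{\kappa,k}(t)-p_{\tau,l}(t)=v_e+w_e\,e^{i\alpha t},\qquad v_e=p_{\kappa,k}-p_{\tau,l},\quad w_e=\omega^k u_\kappa-\omega^l u_\tau,
\]
and expand the squared bond length $\ell_e(t)^2=|v_e+w_e e^{i\alpha t}|^2=|v_e|^2+|w_e|^2+2\,\mathrm{Re}\!\left(c_e e^{i\alpha t}\right)$, where $c_e=\sum_\sigma (v_e)_\sigma (w_e)_\sigma$ is the bilinear pairing of the real edge vector against the relative wave amplitude. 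The first point is that $c_e$ is exactly the number produced when the row of $R(\C)$ indexed by $e$ is applied to $\tilde u=(\omega^k u_\kappa)$; hence $(\omega^k u_\kappa)$ is a periodic-modulo-phase infinitesimal flex precisely when $c_e=0$ for every edge $e$, which is in turn the condition $\Phi(\omega)u=0$ for the matrix symbol function of Definition \ref{d:Phi2}.

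Next I would make this finite and uniform. Writing a general edge as the translate $e_{j,k}$ of a motif edge $e_j$, one has $v_{e_{j,k}}=v_{e_j}$ and $w_{e_{j,k}}=\omega^k w_{e_j}$, so that $c_{e_{j,k}}=\omega^k c_{e_j}$ and in particular $|c_{e_{j,k}}|=|c_{e_j}|$ takes only the finitely many values $|c_{e_1}|,\dots,|c_{e_{|F_e|}}|$; this is what upgrades pointwise estimates to estimates uniform over the whole edge set. For (i) $\Rightarrow$ (ii): if $(\omega^k u_\kappa)$ is a flex then every $c_{e_j}$ vanishes, the oscillatory cross-term in $\ell_e(t)^2$ disappears, and $\ell_e(t)^2\equiv|v_e|^2+|w_e|^2$ is independent of $t$, so the bond length changes are identically zero and a fortiori tend to zero uniformly. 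Retaining the physical real-part motion only adds second-order contributions whose size is controlled by the frequency $\alpha$, and these vanish in the long-wavelength limit, again uniformly by the finiteness just noted.

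For the converse (ii) $\Rightarrow$ (i) I would argue by contraposition: if $(\omega^k u_\kappa)$ is not a flex then $c_{e_{j_0}}\neq0$ for some motif edge $e_{j_0}$, and I must manufacture bond length changes that stay bounded away from zero. Here the translates $e_{j_0,k}$ are decisive, since their cross-terms are $2\,\mathrm{Re}\big(\omega^k c_{e_{j_0}}(e^{i\alpha t}-1)\big)$, and as $k$ ranges over $\bZ^d$ the phases $\omega^k$ sample the circle, so that for a suitable choice of translate and of $t$ the change in $\ell_e(t)^2$ is comparable to $|c_{e_{j_0}}|$; normalising the wave amplitude so that the motion stays nondegenerate as $\alpha\to0$ then prevents uniform convergence. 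I expect this converse to be the main obstacle, as it is exactly the step where one must stop the smallness of a single slowly-oscillating bond from masking a genuinely, persistently stretching bond elsewhere in the crystal: controlling it requires both the uniformity in $e$ furnished by the motif reduction and a correct normalisation of the amplitude against the frequency in the long-wavelength limit.
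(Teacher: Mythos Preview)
The paper does not supply its own proof of this theorem; it is stated with attribution to \cite{pow-matrix} and then the exposition moves on to Definition~\ref{d:Phi2} and Theorem~\ref{t:phaserigiditymatrix}. So there is no in-paper argument to compare your proposal against.

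On the substance of your proposal: the reduction to the scalar $c_e=\langle v_e,w_e\rangle$ and the observation that $c_{e_{j,k}}=\omega^k c_{e_j}$, so that $|c_e|$ takes only finitely many values, is exactly the right mechanism for controlling uniformity over the edge set, and your forward implication (i)$\Rightarrow$(ii) is clean: if every $c_e$ vanishes then $\ell_e(t)^2=|v_e|^2+|w_e|^2$ is constant in $t$ and the bond-length changes are identically zero.

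The genuine gap is in the converse. With the motion exactly as written in the statement, the cross term in $\ell_e(t)^2-\ell_e(0)^2$ is $2\,\mathrm{Re}\bigl(c_e(e^{i\alpha t}-1)\bigr)$, and on the fixed window $t\in[0,T]$ one has $|e^{i\alpha t}-1|\le\alpha T\to 0$. Thus the bond-length changes tend to zero uniformly in $t$ and $e$ \emph{regardless} of whether $c_e=0$, so condition (ii) taken literally is always satisfied and your contrapositive cannot get started. You correctly sense this and gesture at ``normalising the wave amplitude so that the motion stays nondegenerate as $\alpha\to 0$'', but you neither specify the normalisation nor show that a single choice makes the equivalence hold in both directions. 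The appeal to the phases $\omega^k$ sampling the circle does not rescue the argument, because the smallness comes from the global factor $e^{i\alpha t}-1$, not from the phase $\omega^k$. Resolving this---either by pinning down the intended scaling from \cite{pow-matrix} or by reformulating (ii) as a statement about the \emph{rate} at which the bond-length changes vanish---is the missing idea; until it is supplied, the (ii)$\Rightarrow$(i) half is not proved.
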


Next we define the matrix function $\Phi_\C$ of a crystal framework $\C$ with given period vectors.
For the multi-index $k=(k_1,\dots ,k_d)$  write $z^k$ for the usual monomial function
on $\bT^d$.

\begin{defn}\label{d:Phi2}
Let $\C$ be a crystal framework in $\bR^d$, with  motif sets
\[
F_v=\{p_{\kappa,0}:1\leq\kappa\leq |F_v|\}, \quad F_e=\{e_i:1\leq i \leq |F_e|\},
\]
and for each edge $e=[p_{\kappa,k}, p_{\tau,l}]$ in $F_e$
let $v_e$ be the edge vector $p_{\kappa,k}-p_{\tau,l}$.
The matrix-valued function $\Phi_\C(z)$ has rows labelled by the edges $e\in F_e$ and
has $d|F_v|$ columns labelled by $\kappa$ and the coordinate index $\sigma \in\{1,\dots ,d\}$.
If $\kappa \neq \tau$ for edge $e$ in $F_e$ then
\[
(\Phi_\C(z))_{e,(\kappa,\sigma)} = (v_e)_\sigma\bar{z}^k,
\]
\[
(\Phi_\C(z))_{e,(\tau,\sigma)}  = -(v_e)_\sigma\bar{z}^l,
\]
while for each reflexive edge $[p_{\kappa,0}, p_{\kappa,\delta(e)}]$
\[
(\Phi_\C(z))_{e,(\kappa,\sigma)}  = (v_e)_\sigma(1-\bar{z}^{\delta(e)}),
\]
with the remaining entries in each row equal to zero.
\end{defn}
The next theorem gives one connection between
$\Phi_\C(z)$ and the infinitesimal flex properties of $\C$.
Here we view the rigidity matrix $R(\C)$ as a linear
transformation from the product vector space $\K_v = \prod_{\kappa, k}\bC^{d}$
to the edge vector space $\K_e = \prod_{\rm edges}\bC = \prod_{e,k}\bC$.

\begin{thm}\label{t:phaserigiditymatrix}
The restriction of the rigidity matrix $R(\C)$ to the finite dimensional vector space
$\K_v^\omega$ has  representing matrix $\Phi_\C(\ol{\omega})$ with respect to natural
vector space bases.
\end{thm}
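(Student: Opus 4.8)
The plan is to compute the matrix of the restricted linear transformation directly, using the evident coordinate bases on domain and codomain. For the domain $\K_v^\omega$, which is $d|F_v|$-dimensional, I would take the basis $\{\tilde{u}^{(\kappa,\sigma)}\}$ indexed by $\kappa\in F_v$ and $\sigma\in\{1,\dots,d\}$, where $\tilde{u}^{(\kappa,\sigma)}$ is the periodic-modulo-phase vector determined by $u_{\kappa'}=\delta_{\kappa\kappa'}e_\sigma$; this is precisely the basis matching the column labels $(\kappa,\sigma)$ of $\Phi_\C$.

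First I would check that $R(\C)$ carries $\K_v^\omega$ into the analogous phase-periodic subspace $\K_e^\omega$ of the edge space, so that the restricted map is genuinely finite-dimensional and its codomain has a natural basis indexed by the motif edges $F_e$. For a motif edge $e=[p_{\kappa,k},p_{\tau,l}]$ with edge vector $v_e$, the translate $T_m e$ has endpoints $p_{\kappa,k+m},p_{\tau,l+m}$ and the same edge vector $v_e$. Extending the rigidity pairing bilinearly to complex scalars (no conjugation, since $R(\C)$ is a fixed real matrix), the component of $R(\C)\tilde{u}$ at $T_m e$ is
\[
\langle v_e,\omega^{k+m}u_\kappa-\omega^{l+m}u_\tau\rangle=\omega^m\big(\omega^k\langle v_e,u_\kappa\rangle-\omega^l\langle v_e,u_\tau\rangle\big),
\]
which exhibits exactly the phase-periodicity defining $\K_e^\omega$. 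Hence the codomain basis may be taken to be the phase-periodic edge vectors normalised to value $1$ on the motif representative $e$, and the entry of the representing matrix in row $e$, column $(\kappa,\sigma)$ is simply the motif-representative component of $R(\C)\tilde{u}^{(\kappa,\sigma)}$.

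It then remains to read off these components. For a non-reflexive motif edge $e=[p_{\kappa,k},p_{\tau,l}]$, evaluating the displayed formula at $m=0$ on the basis vectors shows that row $e$ has entry $(v_e)_\sigma\omega^k$ in column $(\kappa,\sigma)$, entry $-(v_e)_\sigma\omega^l$ in column $(\tau,\sigma)$, and zero elsewhere; for a reflexive edge $[p_{\kappa,0},p_{\kappa,\delta(e)}]$ the two contributions fall in the same column $(\kappa,\sigma)$ and combine to $(v_e)_\sigma(1-\omega^{\delta(e)})$. Comparing with Definition \ref{d:Phi2} and using that $\ol{z}^k=\omega^k$ when $z=\ol{\omega}$, these are exactly the entries of $\Phi_\C(\ol{\omega})$, which completes the identification.

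The computation itself is routine; the only points requiring care are the bookkeeping of the translation index $m$ on edges versus vertices, and tracking how the conjugation built into Definition \ref{d:Phi2} interacts with the substitution $z=\ol{\omega}$ so that the phase powers $\omega^k,\omega^l$ emerge unconjugated and with the correct signs. I expect the main obstacle to be not the algebra but verifying the phase-periodicity of the image cleanly enough that ``restriction of $R(\C)$'' is unambiguously a map between the two finite-dimensional spaces carrying their stated bases.
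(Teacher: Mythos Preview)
Your proposal is correct and follows essentially the same route as the paper: a direct computation of the $(e,k)$-entry of $R(\C)\tilde{u}$, factoring out $\omega^k$ (your $\omega^m$) and matching the residual expression to the $e$-th row of $\Phi_\C(\ol{\omega})$ acting on $u$. The paper's argument is a condensed version of yours---it writes the motif edge as $[p_{\kappa,0},p_{\tau,\delta}]$ rather than the general $[p_{\kappa,k},p_{\tau,l}]$, leaves the reflexive case to the reader, and does not separate out the verification that the image lies in $\K_e^\omega$---but the computation and the identification are the same.
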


\begin{proof} Let $\tilde{u}$ be a velocity vector in $\K_v^\omega$ determined by $u \in \bC^{d|F_v|}$ as above.
Let $e$ in $F_e$ be an edge of the form $[p_{\kappa,k},p_{\tau,k+\delta}]$.
Let $\langle \cdot , \cdot \rangle$ denote the  bilinear form on $\bC^d$. The $(e,k)^{th}$ entry of $R(\C)\tilde{u}$ can be written  as
\[
(R(\C)\tilde{u})_{e,k}= \langle v_e, \tilde{u}_{\kappa,k}  \rangle - \langle v_e, \tilde{u}_{\tau,k+\delta}  \rangle
\]
\[
= \langle v_e, \omega^ku_\kappa\rangle  - \langle v_e,  \omega^{k+\delta}u_\tau  \rangle
\]
\[
={\omega}^k(\langle v_e, u_\kappa\rangle + \langle -{\omega}^{\delta}v_e,u_\tau \rangle),
\]
and one can check that this agrees with $
{\omega}^k(\Phi_\C(\ol{\omega})u)_e$, both in the case $\kappa \neq \tau$ and when $\kappa=\tau$.
\end{proof}

We can now  identify  the \textit{RUM spectrum}
of $\C$ as the algebraic variety in $\bT^d$ given by
\[
\Omega(\C) = \{z:\rank\Phi_\C(z) < |F_e|\}.
\]
This set does depend on the choice of translation group. One could  define the \textit{primitive RUM spectrum} to correspond to the translation group for a primitive unit cell, and this is then well-defined, up to coordinate permutations.  If one doubles the period vectors, and hence the unit cell,  then the new
spectrum is obtained simply as the range of the old spectrum under the doubling map $(w_1, w_2, w_3) \to
(w_1^2, w_2^2, w_3^2)$.

While we have given the multiphase form of the RUM spectrum the convention in material science is to indicate such a spectrum (in three dimensions) as the set of (reduced) wave vectors ${\bf k}$ in the unit cube $[0,1)^3$.
For calculations of RUM spectrum by different methods see Dove et al \cite{dov-exotic}
(simulation calculations), Wegner \cite{weg} (computer algebra calculations), Owen and Power \cite{owe-pow-crystal} and Power \cite{pow-matrix} (direct
calculations).


The algebraic variety perspective of RUMs appears to be new and opens the way for new methods
and terminology for understanding the curious curved surfaces in \cite{dov-exotic}. For example, it is natural to define the \textit{RUM dimension} of $\C$ to be
the topological dimension of $\Omega (\C)$ as a real algebraic variety. By the comments above on unit cell doubling it follows that this quantity is independent of the translation group.

Tetrahedral net frameworks in 3 dimensions, with pairwise vertex connection, satisfy {Maxwell counting equilibrium} and in the periodic case, with no penetrating tetrahedra,  are sometimes referred to as hypothetical zeolites \cite{fos-tre}.
(In material crystalline zeolites the rigidly bonded ~ SiO$_4$  units make up such a bond-node framework.)
Even in this case all possibilities occur for the RUM dimension, namely $0, 1, 2, 3$, and this depends, roughly speaking, on the degree of symmetry of the framework. In particular as we note below the framework for the cubic form of sodalite indicated below has full RUM spectrum, corresponding to dimension 3.
(This so-called order $N$ property of sodalite was first observed experimentally. See \cite{ham-dov-zeo1997}.)

For crystal frameworks in Maxwell counting equilibrium the matrix function is square and the RUM spectrum is revealed, in theory at least, as the intersection of the zero set of  the multi-variable polynomial
$\det \Phi_\C(z)$ with the $d$-torus $\bT^d$. In fact, after fixing a monomial order on the
$d$ indeterminates $z_1, \dots ,z_d$ one may  formally define the \textit{crystal polynomial} $p_\C(z)$, associated with $\C$. (See \cite{pow-matrix}.) This is given
as the product $z^\gamma\det(\Phi_\C(z))$ where the monomial exponent $\gamma$ is chosen so that

(i) $p_\C(z)$ is a  linear combination of nonnegative power monomials,
\[
p_\C(z)= \sum_{\alpha \in \bZ^d_+} a_\alpha z^\alpha,
\]

(ii)  $p_\C(z)$  has minimum total degree, and

(iii)  $p_\C(z)$ has leading monomial with coefficient $1$.
\medskip

The RUM spectrum certainly has symmetry reflecting the crystallographic
group symmetries of the crystal framework.
Even so the point group may be trivial and the following abstract inverse problem (a Kempe theorem for RUMS ?) may well have an affirmative answer.
\medskip

\noindent \textit{Problem.} Let $q(z,w)$ be a polynomial with real coefficients with $q(1,1)=0$.
Is there a crystal framework with crystal polynomial $p(z,w)$ whose zero set on the $2$-torus is the same as that for $q(z,w)$ ?
\medskip

\subsection{Examples}
The tiling-derived framework of Figure 3 has vanishing crystal polynomial. Indeed, this can be predicted from the existence of a local infinitesimal flex. Such a flex allows the construction of infinitesimal phase-periodic flexes for all phases and so the zero set
of the polynomial includes the entire $2$-torus.

The tiling-derived framework of Figure 2 is also in Maxwell counting equilibrium, its symbol
function is $6 \times 6$ and one can show by direct calculation
 that if we write the indeterminates in this case as $z, w$, then the crystal polynomial
 is
 \[
 (w-1)(z^2+z(1+w)+w)
 \]

The  \textit{kagome framework}  is the framework $\Ckag$ formed by pairwise corner
connected equilateral triangles in regular hexagonal arrangement. Its symbol function
is a $6 \times 6$ matrix and, if we write the indeterminates in this case as $z, w$, then the
crystal polynomial is
\[
(z-1)(w-1)(z-w).
\]

There is a natural $3D$ variant of the kagome lattice known as the \textit{kagome net}.
The corresponding crystal framework $\Cknet$ has period vectors formed by three edges
of a parallelapiped at pairwise angles of $\pi/3$, and each parallelapiped contains two tetrahedral rigid units such that the three planar slices of $\Cknet$ for each pair of period vectors, is a copy of $\Ckag$.
The crystal polynomial takes the form
\[
p(z,w, u) = (z - 1)(w - 1)(u - 1)(z - w)(w - u)(z - u).
\]

 The factorisations in these examples makes evident the nature
of the RUM spectrum as a union of lines and a union of surfaces, respectively.
In fact the individual factors can be predicted in terms of the identification of infinitesimal flexes that are supported within a linear band (for the $2D$ case) or a linear tube (in the $3D$ case). See \cite{pow-matrix}.
For considerably more complicated polynomials with nonlinear "exotic" spectrum see Wegner \cite{weg} and
Power \cite{pow-matrix}.

Figure 4 shows a $4$-ring of tetrahedra, three copies of which, placed on three adjacent sides of an imaginary cube, provide the edges and vertices for a motif $(F_v, F_e)$ for the framework $\Csod$ for the cubic form of sodalite. (From a mathematical perspective, this structure is arguably the most elegant of
the naturally  occurring zeolite framework types \cite{bae-mcc}.)

A full set of eight $4$-rings forms a so-called sodalite cage.
With the $24$ outer vertices of this cage fixed there is nevertheless an infinitesimal flex of the structure and so a local infinitesimal flex of $\Csod$. This is in analogy with the framework $\sevtwo$. It follows that the determinant of the symbol function (a $72 \times 72$ sparse function matrix) vanishes identically and that  the sodalite framework $\Csod$ has RUM dimension $3$.

\begin{center}
\begin{figure}[h]
\centering
\includegraphics[width=6
cm]{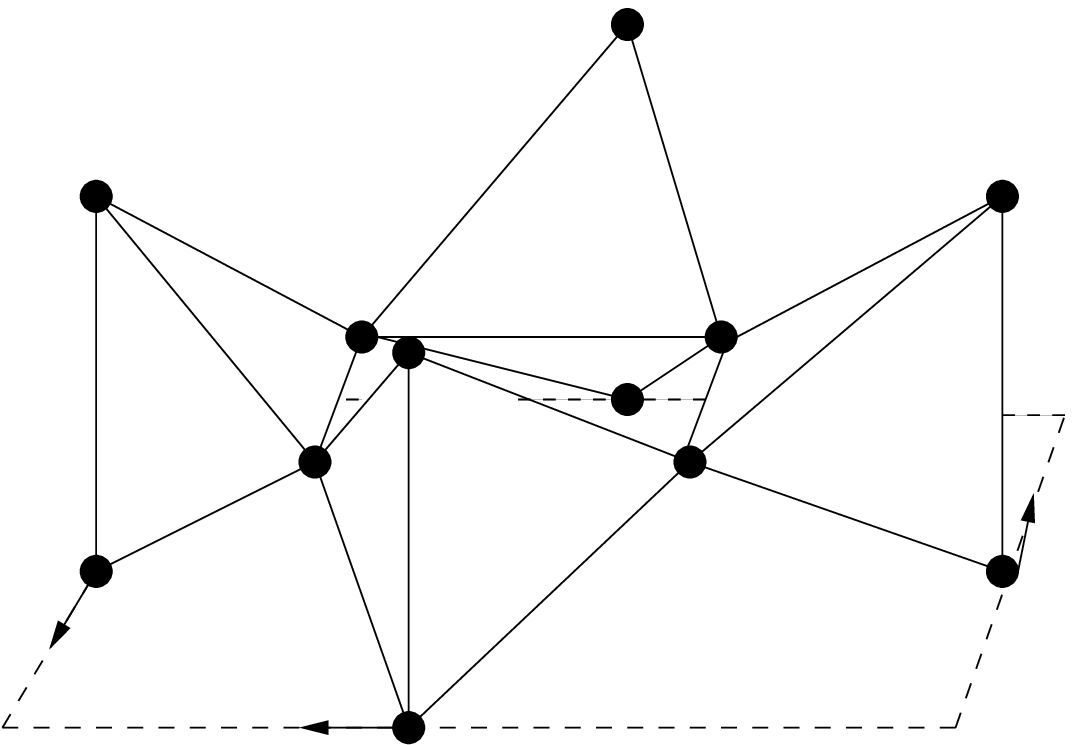}
\caption{A $4$-ring building unit from $\Csod$.}
\end{figure}
\end{center}

\section{Flexes with decay and Toeplitz operators.}
Let $\C$ be a crystal framework with an implicit choice of translational
periodicity.
Write $\K_v^2$ and $ \K_e^2$ for the Hilbert spaces of square-summable sequences in
$\K_v$ and  $\K_e$. Then $R(\C)$ determines a bounded Hilbert space operator
from $\K_v^2$ to  $\K_e^2$.
The natural Hilbert space basis for $\K_v^2$, associated with the given periodicity, may be denoted
$\{\xi_{\kappa, \sigma, k}\}$, where $\sigma$ ranges from $1$ to $d$. Similarly, the basis for  $\K_e^2$ is $\{\eta_{e,k}\}$, with $e\in F_e, k \in \bZ^d$.

Regarding such
square-summable sequences as the Fourier series of square integrable vector-valued functions one obtains unitary operators
$$U_v: \K_v^2 \to L^2(\bT^d)\otimes \bC^{d|F_v|}$$
and
$U_e: \K_e^2\to L^2(\bT^d)\otimes \bC^{|F_e|}$. In the next theorem, from \cite{owe-pow-crystal}, the unitary equivalence
referred to is two-sided and the equivalence in question is the operator identity
$U_e^*R(\C)U_v=M_{\Phi_\C}$. That $U_e^*R(\C)U_v$ has the form of a multiplication operator $M_\Psi$ follows from standard operator theory, since
this operator intertwines the canonical shift operators.
Borrowing standard operator terminology we refer to $\Phi_C$ as the \textit{symbol function} of $\C$.

\medskip

\begin{thm}\label{t:rigidityoperator}
The infinite rigidity matrix $R(\C)$ of the crystal framework $\C$ in $\bR^d$ determines a Hilbert space operator which is unitarily equivalent  to the multiplication operator
\[
M_{\Phi_\C}: L^2(\bT^d)\otimes \bC^{d|F_v|} \to L^2(\bT^d)\otimes \bC^{|F_e|}.
\]
where  $\Phi_C$ is the matrix function for $\C$.
\end{thm}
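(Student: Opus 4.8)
The plan is to work in the Fourier picture and to prove the explicit identity $U_e R(\C)U_v^{*}=M_{\Phi_\C}$ by a direct computation on a dense subspace, which simultaneously establishes the unitary equivalence and pins down the symbol. First I would record that $R(\C)$ is a bounded operator $\K_v^2\to\K_e^2$: each row of $R(\C)$ has at most $2d$ nonzero entries, each of modulus at most $\max_{e\in F_e}\|v_e\|$, and each column meets only finitely many rows, so the operator norm is controlled by the finitely many motif edge vectors together with the bounded vertex degrees. Since $\K_v^2\cong\ell^2(\bZ^d)\otimes\bC^{d|F_v|}$ and $U_v$ is the vector Fourier transform on the $\ell^2(\bZ^d)$ factor (and similarly for $U_e$), it suffices to verify the identity on finitely supported sequences, which form a dense subspace.

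For the core computation, fix a finitely supported velocity field $f$, with vertex-$\kappa$ Fourier transform $\hat f_{\kappa}(z)=\sum_{k}f_{\kappa,k}z^{k}\in L^2(\bT^d)\otimes\bC^{d}$. For a motif edge $e=[p_{\kappa,k},p_{\tau,l}]$ the $(e,m)$-component of $R(\C)f$ is the scalar $\langle v_e,f_{\kappa,k+m}\rangle-\langle v_e,f_{\tau,l+m}\rangle$, and Fourier transforming in $m$ (using $\sum_{m}f_{\kappa,k+m}z^{m}=\bar z^{k}\hat f_{\kappa}(z)$) gives
\[
\widehat{(R(\C)f)}_e(z)=\bar z^{k}\langle v_e,\hat f_{\kappa}(z)\rangle-\bar z^{l}\langle v_e,\hat f_{\tau}(z)\rangle
=\sum_{\sigma}(v_e)_{\sigma}\bar z^{k}\hat f_{\kappa,\sigma}(z)-\sum_{\sigma}(v_e)_{\sigma}\bar z^{l}\hat f_{\tau,\sigma}(z).
\]
This is precisely $(\Phi_\C(z)\hat f(z))_e$ read off from Definition \ref{d:Phi2} in the non-reflexive case; when $\kappa=\tau$ the two terms combine into the single reflexive entry $(v_e)_\sigma(1-\bar z^{\delta(e)})$ (after normalising $k=0$, $l=\delta(e)$), so a single matrix function $\Phi_\C(z)$ accounts for every row. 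Hence $U_e R(\C)U_v^{*}=M_{\Phi_\C}$ on the dense subspace of finitely supported fields, and therefore on all of $L^2(\bT^d)\otimes\bC^{d|F_v|}$ by boundedness.

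A structural reformulation, which is the ``standard operator theory'' alluded to before the statement, gives an independent confirmation and explains why $R(\C)$ must be a multiplier at all. Translational periodicity of the motif says exactly that $R(\C)$ intertwines the shift representations of $\bZ^d$ on $\K_v^2$ and $\K_e^2$; under $U_v,U_e$ these become the scalar multiplications $M_{z^k}\otimes I$, which generate weak-$*$ the maximal abelian algebra $L^\infty(\bT^d)$ acting diagonally. Any bounded operator between $L^2(\bT^d)\otimes\bC^{n}$ and $L^2(\bT^d)\otimes\bC^{m}$ commuting with this $L^\infty(\bT^d)$-module action is multiplication by an essentially bounded measurable matrix function (a decomposable operator), whose fibre at $\omega$ is forced, by Theorem \ref{t:phaserigiditymatrix}, to be the representing matrix $\Phi_\C(\bar\omega)$ of the restriction of $R(\C)$ to the phase space $\K_v^{\omega}$.

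I do not expect a deep obstacle: the genuinely nontrivial inputs, namely the intertwining relation and the structure theorem for such module maps, are standard, so the remaining work is bookkeeping. The two points that need care are the conjugation convention (the translate shift by $k$ produces $\bar z^{k}$, so the symbol is $\Phi_\C(z)$ and not $\Phi_\C(\bar z)$, consistently with the $\bar\omega$ appearing in Theorem \ref{t:phaserigiditymatrix}), and the uniform treatment of reflexive versus non-reflexive edges, so that a single matrix function realises every row of Definition \ref{d:Phi2}.
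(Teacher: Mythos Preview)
Your proposal is correct and follows essentially the approach the paper indicates: the paper does not give a full proof here but cites \cite{owe-pow-crystal} and notes that $U_eR(\C)U_v^{*}$ (the paper's displayed identity $U_e^*R(\C)U_v$ has the unitaries on the wrong sides) must be a multiplication operator because it intertwines the canonical shifts, which is exactly your second, structural paragraph. Your first paragraph supplies the direct Fourier computation that identifies the symbol explicitly as $\Phi_\C$, which is more detail than the paper itself provides but entirely in the same spirit.
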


The following corollary follows from elementary operator theory.

\begin{cor} A crystal framework  has a square-summable infinitesimal flex if and only if its
symbol function has reduced column rank on a set of positive measure.
\end{cor}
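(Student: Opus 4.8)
The plan is to translate the existence of a square-summable infinitesimal flex into an operator-theoretic statement about the kernel of $M_{\Phi_\C}$ via Theorem \ref{t:rigidityoperator}, and then to convert the kernel condition into the pointwise rank condition using the structure of multiplication operators. First I would observe that, by the unitary equivalence $U_e^*R(\C)U_v = M_{\Phi_\C}$, the framework has a nonzero square-summable infinitesimal flex precisely when $\ker R(\C) \cap \K_v^2 \neq \{0\}$, which is equivalent to $\ker M_{\Phi_\C} \neq \{0\}$, since unitaries preserve kernels. So the corollary reduces to the elementary fact that a multiplication operator $M_\Phi$ on $L^2(\bT^d)\otimes\bC^{d|F_v|}$ has nontrivial kernel if and only if $\Phi(z)$ fails to have full column rank on a set of positive measure.

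The main work is this last equivalence, which I would handle in two directions. For the easy direction, suppose $\Phi(z)$ has reduced column rank on a measurable set $E\subseteq\bT^d$ with $|E|>0$. Then for each $z\in E$ one may choose a unit vector $w(z)\in\bC^{d|F_v|}$ with $\Phi(z)w(z)=0$; the measurable-selection step gives a measurable $z\mapsto w(z)$, and setting $f = w\cdot\chi_E$ produces a nonzero element of $L^2(\bT^d)\otimes\bC^{d|F_v|}$ with $M_{\Phi_\C}f=0$ almost everywhere, hence a square-summable flex. For the converse, suppose $\Phi(z)$ has full column rank almost everywhere; I would argue that any $f$ with $M_{\Phi_\C}f=0$ satisfies $\Phi(z)f(z)=0$ for almost every $z$, and full column rank forces $f(z)=0$ a.e., so $f=0$ in $L^2$ and the kernel is trivial.

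The step I expect to be the main obstacle is the measurable-selection argument in the easy direction: one must produce a genuinely measurable kernel vector field $z\mapsto w(z)$ on the set where the rank drops, rather than just a pointwise choice. This can be secured because the set $E=\{z:\rank\Phi_\C(z)<d|F_v|\}$ is itself measurable (indeed, it is the common zero set of the finitely many maximal-minor determinants, which are trigonometric polynomials, hence measurable), and on $E$ a measurable unit kernel vector can be selected by a standard Borel-selection theorem or, more concretely, by a measurable Gram--Schmidt/minor-based construction. Once measurability is in hand the rest is routine: the square-integrability of $f=w\cdot\chi_E$ is immediate from $\|w(z)\|=1$ and $|E|<\infty$, and the verification $M_{\Phi_\C}f=0$ is pointwise. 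I would close by noting that everything here is exactly the elementary operator theory invoked in the corollary's statement, the only subtlety being the selection measurability.
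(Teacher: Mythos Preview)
Your proposal is correct and is precisely the elementary operator theory the paper invokes without detail: the paper simply states that the corollary ``follows from elementary operator theory'' after Theorem~\ref{t:rigidityoperator}, and your argument supplies exactly that---the unitary transfer to $\ker M_{\Phi_\C}$, followed by the standard pointwise kernel characterisation for matrix multiplication operators, with the measurable-selection step handled explicitly. There is nothing to compare; you have written out what the paper leaves implicit.
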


It is natural to ask whether crystal frameworks possess infinitesimal flexes which decay to zero at infinity.
Note that if an infinitesimal linear subframework has an infinitesimal flex with such decay then the flex velocities must be orthogonal to the direction of this subframework. (For otherwise there must be identical nozero velocity components in that direction on all the subframework points.)
For this reason it follows that the kagome framework and similar "linear" frameworks have no asymptotically vanishing flexes and in particular, no square-summable flexes.
In fact one can exploit the matrix function formalism to obtain the following much more general fact.

\begin{thm}\label{t:internalflexthm}
\cite{owe-pow-crystal}
The following are equivalent for a crystal framework $\C$ with Maxwell counting equilibrium.

(i) $\C$ has a nonzero internal ("finitely-supported", "local") infinitesimal flex

(ii) $\C$ has a nonzero summable infinitesimal flex.

(iii) $\C$ has a nonzero square-summable infinitesimal flex.
\end{thm}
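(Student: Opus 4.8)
The plan is to establish the cycle of implications $(i)\Rightarrow(ii)\Rightarrow(iii)\Rightarrow(i)$, of which only the last is substantial. The implications $(i)\Rightarrow(ii)$ and $(ii)\Rightarrow(iii)$ are immediate from inclusions of sequence spaces: a finitely-supported flex has only finitely many nonzero velocity components and so lies in $\ell^1$, while any $\ell^1$ sequence is bounded and hence lies in $\ell^2$. Thus the entire content is the reverse implication $(iii)\Rightarrow(i)$, which manufactures a strictly \emph{local} flex out of a merely square-summable one.

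For this I would work throughout with the symbol function $\Phi_\C$. By Theorem \ref{t:rigidityoperator} a square-summable flex is exactly a nonzero element of $\ker M_{\Phi_\C}$ in $L^2(\bT^d)\otimes\bC^{d|F_v|}$, and by the corollary above such a vector exists if and only if $\Phi_\C(z)$ has reduced column rank on a subset of $\bT^d$ of positive Haar measure. Here the Maxwell counting equilibrium enters decisively: it forces $\Phi_\C(z)$ to be a \emph{square} matrix function, of size $|F_e|=d|F_v|$, so that reduced rank on a positive-measure set is the same as the vanishing of the single scalar function $\det\Phi_\C(z)$ on that set.

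The next, and crucial, step is to promote this positive-measure vanishing to \emph{identical} vanishing and then extract a polynomial kernel vector. Each entry of $\Phi_\C(z)$ is a trigonometric (Laurent) polynomial by Definition \ref{d:Phi2}, so $\det\Phi_\C(z)$ is one as well; up to a monomial factor it is the crystal polynomial $p_\C(z)$. A trigonometric polynomial on $\bT^d$ that is not identically zero has zero set of Haar measure zero, being the boundary trace of a nonzero holomorphic function on $(\bC\setminus\{0\})^d$; since $\det\Phi_\C$ vanishes on a set of positive measure it must therefore vanish identically. Hence $\Phi_\C$ is singular as a matrix over the field $\bC(z_1,\dots,z_d)$ of rational functions, so it admits a nonzero rational kernel vector $g(z)$, and clearing denominators by a common Laurent-polynomial multiple yields a nonzero \emph{Laurent-polynomial} vector $f(z)$ with $\Phi_\C(z)f(z)\equiv 0$. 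The finitely many Fourier coefficients of $f$ are, via the unitary $U_v$ of Theorem \ref{t:rigidityoperator}, precisely a finitely-supported velocity vector in $\K_v$ annihilated by $R(\C)$; since $R(\C)$ is a real matrix, the real and imaginary parts are separately annihilated, and taking whichever is nonzero gives the required nonzero local real infinitesimal flex, establishing $(i)$.

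The main obstacle is exactly this middle passage, from the operator-theoretic statement of a rank drop on a positive-measure set to the algebraic statement that $\Phi_\C$ is identically singular, and thence to an honestly polynomial kernel vector. The analytic input (positive-measure zero sets of trigonometric polynomials are empty up to measure zero) and the algebraic input (clearing denominators in the rational kernel) are each routine, but it is their combination, available only because the squareness supplied by Maxwell equilibrium makes $\det\Phi_\C$ the governing quantity, that converts global $L^2$ information into a strictly compactly-supported flex. Without the square hypothesis the determinant criterion is unavailable, the column-rank defect need no longer hold everywhere on $\bT^d$, and the equivalence genuinely breaks down.
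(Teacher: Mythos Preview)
The paper is a survey and does not itself supply a proof of this theorem; it is stated with a citation to \cite{owe-pow-crystal} and prefaced only by the remark that ``one can exploit the matrix function formalism'' to obtain it. So there is no in-paper proof to compare against.

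That said, your argument is correct and is exactly the kind of exploitation of the matrix function formalism the paper gestures at. The easy implications $(i)\Rightarrow(ii)\Rightarrow(iii)$ are as you say. For $(iii)\Rightarrow(i)$ your chain is sound: the Corollary gives a positive-measure rank drop of $\Phi_\C$; Maxwell equilibrium makes $\Phi_\C$ square so this is governed by the single trigonometric polynomial $\det\Phi_\C$; a trigonometric polynomial vanishing on a set of positive Haar measure in $\bT^d$ is identically zero; hence $\Phi_\C$ is singular over the field $\bC(z_1,\dots,z_d)$ and admits a nonzero rational, hence (after clearing denominators) Laurent-polynomial, kernel vector; pulling back through $U_v$ yields a finitely-supported real flex after taking real or imaginary part. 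Your closing paragraph also correctly isolates where the Maxwell hypothesis is used and why the argument would collapse without it.
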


\subsection{Semi-infinite and bi-crystal frameworks}
We may define a \textit{semi-infinite crystal framework} $\D$ as a subframework of a crystal framework $\C$ with an exposed face. More formally $\D$ is supported by the framework vertices that lie in a half-space which is invariant
under a subsemigroup of an underlying translation group for $\C$.
In the case of planar frameworks this may be specified in the form of a triple $(F_v, F_e, \T_+)$ where
$(F_v, F_e)$ is an appropriate motif and $\T_+$ is a subsemigroup of $\T$ isomorphic to one of
$\bZ_+\times \bZ$,
$\bZ_-\times \bZ$,
$\bZ\times \bZ_+$,
$\bZ\times \bZ_-$.
It is not hard to verify that the rigidity operators of semi-infinite crystal frameworks can be identified with various Toeplitz operators derived from $M_{\Phi_\C}$ by compression to Hardy space Hilbert spaces, such as $H^2(\bT)\otimes L^2(\bT)$ in the case of $\bZ_+\times \bZ$.

We remark that semi-infinite frameworks have rigidity matrices that feature as block submatrices of the rigidity matrices
of \textit{bi-crystal frameworks}. By this we mean (for example) a framework obtained in three dimensions
by identifying two semi-infinite frameworks at their common surface of vertices, when this is possible.
It seems that Toeplitz operators  could provide a useful formalism for their analysis.\footnote{
Added October 2011. It appears, from \cite{sut-bal} for example, that there is extensive interest in material bicrystals.}

For semi-infinite crystal frameworks the equivalence of the previous theorem no longer holds and we illustrate this with a simple variant of the strip framework in Figure 1 whose matricial symbol function $\Phi(z)$ is as given in Section 2. Consider
the submatrix function $\Phi_0(z)$ obtained on removing the first two columns, corresponding to the "supporting" framework vertices, and removing the row corresponding to the "base" edges. The degeneracies of this matrix function correspond to the phases of periodic-modulo-phase infinitesimal flexes
which do not deflect the "supporting" vertices.
We have
$$ \Phi_0(z)= \left[ \begin {array}{ccccc}
0&4&0&0\\
-1&1&1&-1\\
0&0&1&3\\
3\ol{z}&\ol{z}&-3&-1
\end {array} \right].
$$
The determinant, $16(2-3\ol{z})$, does not vanish on $\bT$ and so
the "base-rooted" framework is infinitesimally rigid from the point of view of phase-periodic
infinitesimal flexes. Since the determinant certainly does not vanish
on a set of positive measure on $\bT$, there are no square-summable infinitesimal flexes
which fix the baseline vertices of the framework.
On the other hand there is an unbounded infinitesimal flex, corresponding to a two-way
infinite geometric series for
$z=2/3$ and this unbounded flex  reflects the concatenated lever structure of the framework.

 This analysis also applies to the strip framework  in Figure 6. One can readily check that
  up to scalar multiplication there is a unique proper
(unbounded) infinitesimal flex. (Note incidentally, that this flex does not
extend to a continuous flex. Put another way, each finite strip subframework
here is continuously flexible but the complete two-way infinite
framework is not. This kind of phenomenon is referred to as \textit{vanishing flexibility}
in \cite{owe-pow-crystal} and can occur in more subtle ways.)
\bigskip

\begin{center}
\begin{figure}[h]
\centering
\includegraphics[width=7cm]{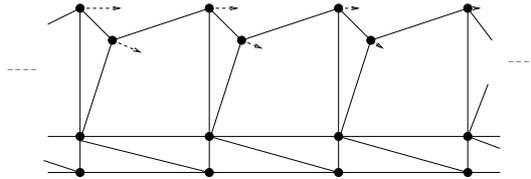}
\caption{An unbounded infinitesimal flex.}
\end{figure}
\end{center}
\bigskip

There are two natural semi-infinite frameworks $\C_+, \C_-$ associated with the strip framework
of Figure 6, namely
the right strip and the left strip. Each has a infinite triangulated rigid base framework which supports linked triangles. The former has a square-summable flex while the latter does not. This fact is evident by elementary direct analysis and in fact can be viewed as a reflection of the contrasting nature of analytic and coanalytic Toeplitz operators. We expect such operator theory to play a useful role in the analysis of more complex examples with larger unit cells, and in the analysis of surface phonons and surface phenomena
in semi-infinite structures.

\bibliographystyle{abbrv}
\def\lfhook#1{\setbox0=\hbox{#1}{\ooalign{\hidewidth
  \lower1.5ex\hbox{'}\hidewidth\crcr\unhbox0}}}

\end{document}